\definecolor{darkred}{rgb}{0.6,0.1,0.1}
\definecolor{darkblue}{rgb}{0.2,0.2,0.6}
\newcommand{\beq}{\begin{equation} \begin{split}}
\newcommand{\eeq}{\end{split} \end{equation}}
\newcommand\Omg{\Omega}
\def\section{\@startsection{section}{1}\z@{.9\linespacing\@plus\linespacing}%
	{.7\linespacing} {\fontsize{13}{14}\selectfont\bfseries\centering}}
\def\paragraph{\@startsection{paragraph}{4}%
	\z@{0.3em}{-.5em}%
	{$\bullet$ \ \normalfont\itshape}}
\renewcommand\and{\qquad\text{and}\qquad}
\newcommand\sm{\setminus}
\newcommand{\comm}[1]{}
\def\sfH{\mathsf{H}}
\def\bm1{\mathbbm{1}}
\def\p{\partial}
\def\omg{\omega}
\def\Re{{\rm Re}\,}
\def\arr{\rightarrow}
\newcommand{\sfJ}{\mathsf{J}}
\def\tt{\theta}
\def\lm{\lambda}
\def\p{\partial}
\def\sfH{\mathsf{H}}
\def\dd{{\,\mathrm{d}}}
\def\omg{\omega}
\newcounter{counter_a}
\newcommand{\eg}{{\it e.g.}\,}
\numberwithin{figure}{section}
\numberwithin{equation}{section}
\theoremstyle{plain}
\newtheorem*{thm*}{Theorem}
\newtheorem{thm}{Theorem}[section]
\newtheorem{lem}[thm]{Lemma}
\newtheorem{prop}[thm]{Proposition}
\newtheorem{dfn}[thm]{Definition}
\theoremstyle{remark}
\newtheorem{remark}[thm]{Remark}
\theoremstyle{plain}
\newcommand{\beu}{\begin{equation*}}
\newcommand{\eeu}{\end{equation*}}
\newcommand{\besu}{\begin{equation*}
\begin{aligned}}
\newcommand{\eesu}{\end{aligned}
\end{equation*}}
\newcommand{\bes}{\begin{equation}
\begin{aligned}}
\newcommand{\ees}{\end{aligned}
\end{equation}}
\newcommand\cK{\mathcal K}
\newcommand\cL{\mathcal L}
\newcommand\cM{\mathcal M}
\newcommand\frh{\mathfrak h}
\newcommand\ov{\overline}
\newcommand\wh{\widehat}
\newcommand\void[1]{}
\def\ov{\overline}
   \def\dN{{\mathbb N}}   
      \def\dR{{\mathbb R}}
\def\dS{{\mathbb S}}
   \def\sfH{{\mathsf H}}   
\def\sfJ{{\mathsf J}}
   \def\cK{{\mathcal K}}   \def\cL{{\mathcal L}}
\def\cM{{\mathcal M}}
\renewcommand{\div}{\mathrm{div}\,}
\newcommand{\dom}{\mathrm{dom}\,}
\definecolor{DarkGreen}{rgb}{0,0.5,0.1}
\definecolor{DarkBlue}{rgb}{0,0.1,0.5}
\newcommand\soutD{\bgroup\markoverwith
	{\textcolor{DarkGreen}{\rule[.5ex]{2pt}{1pt}}}\ULon}
\newcommand\soutP{\bgroup\markoverwith
	{\textcolor{blue}{\rule[.5ex]{2pt}{1pt}}}\ULon}
\newcommand{\Hm}[1]{\leavevmode{\marginpar{\tiny%
			$\hbox to 0mm{\hspace*{-0.5mm}$\leftarrow$\hss}%
			\vcenter{\vrule depth 0.1mm height 0.1mm width \the\marginparwidth}%
			\hbox to
			0mm{\hss$\rightarrow$\hspace*{-0.5mm}}$\\
			\relax\raggedright #1}}}
\begin{document}

\title[]{Improved inequalities between Dirichlet and Neumann eigenvalues of the biharmonic operator}

\author[V. Lotoreichik]{Vladimir Lotoreichik}
\address[V. Lotoreichik]{Department of Theoretical Physics, Nuclear Physics Institute, 	Czech Academy of Sciences, 25068 \v Re\v z, Czech Republic}
\email{lotoreichik@ujf.cas.cz}

\subjclass{}

\keywords{}

\maketitle

\begin{abstract}
	We prove that the $(k+d)$-th Neumann eigenvalue of the biharmonic operator on a bounded connected $d$-dimensional $(d\ge2)$ Lipschitz domain is not larger than its $k$-th Dirichlet eigenvalue for all $k\in\mathbb{N}$. For a
	special class of domains with symmetries we obtain a stronger inequality. Namely,
	for this class of domains, we prove that 
	the $(k+d+1)$-th Neumann eigenvalue of the biharmonic operator does not exceed  its $k$-th Dirichlet eigenvalue for all $k\in\mathbb{N}$. In particular, in two dimensions, this special class consists of domains having an axis of symmetry.
\end{abstract}

\section{Introduction}\label{s:intro}
Eigenvalue inequalities for differential operators is a classical topic in spectral theory.
The aim of the present paper is to obtain inequalities between Dirichlet and Neumann eigenvalues of the biharmonic operator on a bounded domain in the spirit of the eigenvalue inequality proved by Friedlander~\cite{F91}, which states that the $(k+1)$-th Neumann eigenvalue of the Laplacian does not exceed its $k$-th Dirichlet eigenvalue. Recently, Provenzano ~\cite{Pr19} obtained among other results an inequality of this type for the biharmonic operator and conjectured its improved variant. Our analysis is largely inspired by this conjecture. We obtain an eigenvalue inequality for the biharmonic operator, which significantly improves the inequality in~\cite{Pr19} in the space dimension $d \ge 3$, but it still remains weaker than the conjectured inequality. Moreover, for a class of domains with symmetries we managed to prove the eigenvalue inequality for the biharmonic operator conjectured in~\cite{Pr19}. 

The spectral analysis of the biharmonic operator was initially motivated by applications in classical mechanics in the study of vibration of plates~\cite{R}. It is now an independent mathematical area with many challenging open problems. 
Let us now introduce the biharmonic operators with Dirichlet and Neumann boundary conditions on a bounded domain.
Let $\Omg\subset\dR^d$, $d\ge 2$, be a bounded connected Lipschitz domain. As usual, we denote by $H^2(\Omg)$ the $L^2$-based second-order Sobolev space on $\Omg$ and by $H^2_0(\Omg)$ the closure of $C^\infty_0(\Omg)$ in $H^2(\Omg)$.   The non-negative, densely defined quadratic forms
\begin{equation}\label{eq:forms}
\begin{aligned}
		\frh_{\rm D}[u] &:= \sum_{i,j=1}^d\|\p_{ij} u\|^2_{L^2(\Omg)},\qquad \dom\frh_{\rm D} := H^2_0(\Omg),\\
		\frh_{\rm N}[u] & := 
		\sum_{i,j=1}^d\|\p_{ij} u\|^2_{L^2(\Omg)},\qquad \dom\frh_{\rm N} := H^2(\Omg), 
\end{aligned}
\end{equation}
are closed in the Hilbert space $L^2(\Omg)$ (see \eg~\cite[Theorem 2.1]{BK22}); here we use the abbreviation $\p_{ij}u := \frac{\p^2u}{\p x_i\p x_j}$
for $i,j\in\{1,2,\dots,d\}$. 
The self-adjoint Dirichlet  and Neumann  biharmonic operators $\sfH_{\rm D}$ and $\sfH_{\rm N}$ on $\Omg$ are associated with the quadratic forms $\frh_{\rm D}$ and $\frh_{\rm N}$, respectively, via the first representation theorem~\cite[Theorem VI 2.1]{K}. It is not hard to check via integration by parts that both operators act as the biharmonic operator $\Delta^2$ on functions satisfying appropriate boundary conditions, which are specified in Remark~\ref{rem:BC} below.
The spectra of these operators are purely discrete.  In the two-dimensional setting ($d=2$), the Dirichlet biharmonic operator describes the
vibrations of a clamped plate, while its Neumann counterpart describes the free plate. The literature on the biharmonic operators is quite extensive and it is not possible to make a complete overview.
Many contributions are devoted to spectral isoperimetric inequalities~\cite{AB95,  C11, CL20, K20,  Le23, N95}. Asymptotic expansions of the eigenvalues of the biharmonic operator are considered  in~\cite{FR23, FP23, KN11}. Bounds on the eigenvalues of the biharmonic operator are obtained in~\cite{BF20, CP22, WX07}.

Let us now discuss in detail the results obtained in~\cite{Pr19} by Provenzano.
We denote by $\{\lm_k\}_{k\ge 1}$ and $\{\mu_k\}_{k\ge 1}$ the eigenvalues of $\sfH_{\rm D}$ and $\sfH_{\rm N}$, respectively, enumerated in the non-decreasing order and repeated with multiplicities taken into account.
The following inequality is proved in~\cite[Theorem 1.1]{Pr19}
\begin{equation}\label{eq:Provenzano}
	\mu_{k+2} < \lm_k,\qquad \text{for all}\,\,k\in\dN.
\end{equation}
In~\cite{Pr19}, a more general inequality for the polyharmonic operators on $\Omg$ is actually proved, which reduces to~\eqref{eq:Provenzano} in the case of the biharmonic operator. In the same paper it was conjectured that the inequality~\eqref{eq:Provenzano} can be strengthened as
\begin{equation}\label{eq:conjecture}
	\mu_{k+d+1} \le \lm_k,\qquad \text{for all}\,\, k\in\dN\qquad \text{\rm (conjecture)},
\end{equation}
and that the inequality in~\eqref{eq:conjecture} might even be always strict.

In the present paper we improve the inequality~\eqref{eq:Provenzano} in the space dimension $d\ge 3$ in the general setting and prove the conjectured inequality~\eqref{eq:conjecture} for a class of domains having additional symmetries. Our first result concerns the general case for higher space dimensions.
\begin{thm}\label{thm1}
	For the space dimension $d\ge 3$, the following inequality holds
	\[
		\mu_{k+d} \le \lm_k,\qquad\text{for all}\,\,\,k\in\dN.
	\]
\end{thm}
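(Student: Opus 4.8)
The plan is to apply the Courant--Fischer min-max principle to $\sfH_{\rm N}$: since $\dom\frh_{\rm N}=H^2(\Omg)$, it is enough to produce a subspace $\mathcal V\subset H^2(\Omg)$ with $\dim\mathcal V=k+d$ on which the Rayleigh quotient $u\mapsto\frh_{\rm N}[u]/\|u\|_{L^2(\Omg)}^2$ does not exceed $\lm_k$. I would look for $\mathcal V=\mathcal E_k\oplus\mathcal F$, where $\mathcal E_k=\operatorname{span}\{u_1,\dots,u_k\}$ is the span of the first $k$ eigenfunctions of $\sfH_{\rm D}$ --- so $\mathcal E_k\subset H^2_0(\Omg)$, the forms $\frh_{\rm D}$ and $\frh_{\rm N}$ agree on $\mathcal E_k$, and $\frh_{\rm D}[v]\le\lm_k\|v\|_{L^2(\Omg)}^2$ for $v\in\mathcal E_k$ --- and $\mathcal F$ is a $d$-dimensional auxiliary space to be constructed. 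The splitting is designed so that the mixed terms cancel: if every $w\in\mathcal F$ solves $\Delta^2 w=\lm_k w$ pointwise in $\Omg$, then for $v\in\mathcal E_k$ two integrations by parts (legitimate because $v\in H^2_0(\Omg)$ and $w$ is smooth) give $\frh_{\rm N}(v,w)=\sum_{i,j}(\p_{ij}v,\p_{ij}w)_{L^2(\Omg)}=(v,\Delta^2 w)_{L^2(\Omg)}=\lm_k(v,w)_{L^2(\Omg)}$, whence
\[
 \lm_k\|v+w\|_{L^2(\Omg)}^2-\frh_{\rm N}[v+w]=\bigl(\lm_k\|v\|_{L^2(\Omg)}^2-\frh_{\rm D}[v]\bigr)+\bigl(\lm_k\|w\|_{L^2(\Omg)}^2-\frh_{\rm N}[w]\bigr).
\]
The first bracket on the right is nonnegative for $v\in\mathcal E_k$, so the whole scheme works as soon as $\mathcal F$ is a $d$-dimensional space of solutions of $\Delta^2 w=\lm_k w$ in $\Omg$ satisfying $\frh_{\rm N}[w]\le\lm_k\|w\|_{L^2(\Omg)}^2$ for all $w\in\mathcal F$, and meeting $H^2_0(\Omg)$ only in $0$ (this last point being what makes $\dim(\mathcal E_k\oplus\mathcal F)=k+d$).

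Write $\beta:=\lm_k^{1/4}$. For $|\xi|=\beta$ the plane wave $e^{i\xi\cdot x}$ solves $\Delta^2 w=\lm_k w$ and has rank-one Hessian, so $\sum_{i,j}|\p_{ij}e^{i\xi\cdot x}|^2=|\xi|^4=\lm_k$ identically; consequently every $w$ in the two-dimensional space $\operatorname{span}\{e^{i\xi\cdot x},e^{-i\xi\cdot x}\}$ even has $\frh_{\rm N}[w]=\lm_k\|w\|_{L^2(\Omg)}^2$, and taking $\mathcal F$ equal to this space already reproves $\mu_{k+2}\le\lm_k$ from \cite{Pr19}. The heart of the matter --- and, I expect, the main obstacle --- is to enlarge $\mathcal F$ by $d-2$ further dimensions while keeping $\frh_{\rm N}[w]\le\lm_k\|w\|_{L^2(\Omg)}^2$ on the full span. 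Plane waves by themselves cannot do this for a general domain: for $w=\sum_m c_m e^{i\xi^{(m)}\cdot x}$ with $|\xi^{(m)}|=\beta$ one computes
\[
 \lm_k\|w\|_{L^2(\Omg)}^2-\frh_{\rm N}[w]=\int_\Omg\sum_{m,n}\bigl(\beta^4-(\xi^{(m)}\cdot\xi^{(n)})^2\bigr)c_m\overline{c_n}\,e^{i(\xi^{(m)}-\xi^{(n)})\cdot x}\,\D x,
\]
and the matrix with entries $\beta^4-(\xi^{(m)}\cdot\xi^{(n)})^2$ is nonnegative entrywise with zero diagonal, hence positive semidefinite only if all the $\xi^{(m)}$ are parallel, i.e. only in dimension two. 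One is therefore forced to add to $\mathcal F$ solutions of $\Delta^2 w=\lm_k w$ of lower symmetry; the most economical candidates are affine functions $g$, for which $\p_{ij}g\equiv 0$ so that $\frh_{\rm N}[g]=0$ automatically and the mixed form $\frh_{\rm N}(v,g)$ vanishes identically, the price being that $\Delta^2 g=0\neq\lm_k g$, which makes the cancellation above go through only after $g$ has been $L^2(\Omg)$-orthogonalized against $\mathcal E_k\oplus\operatorname{span}\{e^{\pm i\xi\cdot x}\}$. Verifying that such an orthogonalization still leaves $d-2$ independent directions --- uniformly in $k$ and for an arbitrary bounded Lipschitz domain, which is a genuinely delicate dimension count --- is the technical crux, and I would expect it to be precisely what both forces the hypothesis $d\ge 3$ here and stops the argument short of the conjectural gap $d+1$.

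Finally, the linear independence $\mathcal E_k\cap\mathcal F=\{0\}$ is elementary: a nonzero element of $\mathcal F$ is (the restriction of) a real-analytic function on $\dR^d$ built from $e^{\pm i\xi\cdot x}$ and affine functions, and such a function cannot have both its value and its gradient vanish on $\partial\Omg$ --- the boundary of a bounded domain, which cannot be contained in a finite union of parallel hyperplanes --- hence cannot lie in $H^2_0(\Omg)$. Thus $\dim\mathcal V=k+d$, and the min-max principle yields $\mu_{k+d}\le\lm_k$ for every $k\in\dN$.
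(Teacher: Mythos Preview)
Your overall strategy (min-max, the splitting $\mathcal V=\mathcal E_k\oplus\mathcal F$, and the integration-by-parts identity $\frh_{\rm N}(v,w)=\lm_k(v,w)_{L^2(\Omg)}$ for $v\in H^2_0(\Omg)$ and $\Delta^2 w=\lm_k w$) is exactly the paper's, but the construction of $\mathcal F$ contains a genuine gap, and the gap stems from a wrong turn at the point where you dismiss plane waves.

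Your formula
\[
 \lm_k\|w\|_{L^2(\Omg)}^2-\frh_{\rm N}[w]=\sum_{m,n}\bigl(\beta^4-(\xi^{(m)}\cdot\xi^{(n)})^2\bigr)c_m\overline{c_n}\int_\Omg e^{i(\xi^{(m)}-\xi^{(n)})\cdot x}\,\D x
\]
is correct, but the conclusion ``plane waves by themselves cannot do this for a general domain'' is not: the integrals $\int_\Omg e^{i(\xi^{(m)}-\xi^{(n)})\cdot x}\,\D x$ depend on the directions, and the paper exploits precisely this freedom. Working with $v_l(x)=\sin(\omg_l\cdot x)$, $|\omg_l|^4=\lm_k$, one has $\p_{ij}v_l=-\omg_{l,i}\omg_{l,j}v_l$, so the cross terms in $\frh_{\rm N}[\sum_l c_lv_l]$ are $(\omg_l\cdot\omg_m)^2(v_l,v_m)_{L^2(\Omg)}$. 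The key idea you are missing is that the directions $\omg_1,\dots,\omg_d$ can be chosen, via the Borsuk--Ulam theorem, so that the $d$ functions $v_l$ are mutually \emph{$L^2(\Omg)$-orthogonal}: one builds them inductively, at step $l$ applying Borsuk--Ulam to the odd continuous map $\dS^{d-1}\to\dR^{d-1}$, $\theta\mapsto\bigl((v_1,\sin(\beta\theta\cdot x))_{L^2(\Omg)},\dots,(v_{l-1},\sin(\beta\theta\cdot x))_{L^2(\Omg)},0,\dots,0\bigr)$. With $(v_l,v_m)_{L^2(\Omg)}=0$ for $l\ne m$ all cross terms vanish and one gets $\frh_{\rm N}[w]=\lm_k\|w\|_{L^2(\Omg)}^2$ on the full $d$-dimensional span. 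This works for every $d\ge 2$; the restriction $d\ge 3$ in the statement is only because $\mu_{k+2}<\lm_k$ is already known.

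Your alternative via affine functions does not survive the dimension count you yourself flag as ``the technical crux'': the affine functions on $\dR^d$ form a $(d+1)$-dimensional space, and requiring $L^2(\Omg)$-orthogonality to $\mathcal E_k\oplus{\rm span}\{e^{\pm i\xi\cdot x}\}$ imposes $k+2$ linear constraints, leaving at most $d-1-k$ independent directions. For $k\ge 2$ this is already fewer than the $d-2$ you need, and for $k\ge d$ it leaves nothing, so the argument cannot be uniform in $k$. (Projecting an affine $g$ onto the orthogonal complement instead does not help, since the projection is no longer affine and $\frh_{\rm N}$ of it is no longer zero.)

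Finally, for $\mathcal E_k\cap\mathcal F=\{0\}$ the paper uses a cleaner mechanism than real-analyticity: every $w\in{\rm span}\{v_1,\dots,v_d\}$ satisfies $-\Delta w=\sqrt{\lm_k}\,w$, and a function in $H^2_0(\Omg)$ solving $-\Delta w=\mu w$ must vanish by unique continuation (Lemma~\ref{lem:BR}).
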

The proof of the above theorem still works in two dimensions and gives $\mu_{k+2}\le \lm_k$ for all $k\in\dN$, which is slightly weaker than the inequality~\eqref{eq:Provenzano}. For this reason we have excluded the case $d=2$ from the formulation.

In order to formulate the second result
we introduce for $l\in\{2,\dots,d\}$ the mapping 
\begin{equation}\label{eq:symmetries}
	\sfJ_l\colon\dR^d\arr\dR^d,\qquad \sfJ_l x := (x_1,x_2,\dots,-x_l,\dots,x_d)^\top, 
\end{equation}
where $x = (x_1,x_2,\dots,x_d)^\top$.
The domain $\Omg$ is symmetric with respect to the hyperplane $\{x\in\dR^d\colon x_l = 0\}$ with $l\in\{2,\dots,d\}$ in the case that $\sfJ_l(\Omg) = \Omg$.
\begin{thm}\label{thm2}
	Let $d \ge 2$ and assume that
	the bounded connected Lipschitz domain $\Omg$ is such that 
	$\sfJ_l(\Omg) = \Omg$ for all $l\in\{2,\dots,d\}$. Then the following inequality holds
	\[
		\mu_{k+d+1}\le \lm_k,\qquad\text{for all}\,\,\,k\in\dN.
	\]
\end{thm}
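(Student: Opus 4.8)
The plan is to bound $\mu_{k+d+1}$ from above by the min--max principle: I would exhibit a trial subspace $W\subset\dom\frh_{\rm N}=H^2(\Omg)$ with $\dim W=k+d+1$ on which $\frh_{\rm N}[u]\le\lm_k\|u\|^2$. Let $u_1,\dots,u_k$ be $L^2$-orthonormal Dirichlet eigenfunctions associated with $\lm_1,\dots,\lm_k$, set $c:=\lm_k^{1/4}$ (which is positive since $\lm_1>0$: a function of $H^2_0(\Omg)$ with vanishing Hessian is affine, hence vanishes), and take as supplementary trial functions the $d+1$ explicit functions
\[
\cos(cx_1),\quad\sin(cx_1),\quad\sin(cx_2),\ \dots,\ \sin(cx_d),
\]
whose span I denote by $V$; then $W:=\mathrm{span}\{u_1,\dots,u_k\}+V$. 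Each supplementary function $w$ is smooth, satisfies $\Delta w=-c^2w$ and hence $\Delta^2w=\lm_kw$ pointwise, and — this is the structural point — has Hessian $(\p_{ij}w)$ supported on the single diagonal entry $(l,l)$ attached to the variable $x_l$ on which it depends, where $\p_{ll}w=-c^2w$.

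The form $\frh_{\rm N}$ on $W$ would then be handled via three elementary facts. First, for $u\in H^2_0(\Omg)$ and smooth $w$ with $\Delta^2w=\lm_kw$, two integrations by parts — legitimate after approximating $u$ in $H^2$ by functions of $C^\infty_0(\Omg)$, which kills all boundary terms — give $\frh_{\rm N}[u,w]=\sum_{ij}\langle\p_{ij}u,\p_{ij}w\rangle=\langle u,\Delta^2w\rangle=\lm_k\langle u,w\rangle$, so the mixed terms between the $u_j$ and $V$ behave as if the supplementary functions were $\lm_k$-eigenfunctions. Second, by the support property of the Hessians, two supplementary functions of the same variable satisfy $\frh_{\rm N}[w,w']=c^4\langle w,w'\rangle=\lm_k\langle w,w'\rangle$, while two supplementary functions of different variables satisfy $\frh_{\rm N}[w,w']=0$. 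Third — and here the hypothesis is used — for $l\in\{2,\dots,d\}$ the function $\sin(cx_l)$ is odd under $\sfJ_l$ while every other supplementary function is even under $\sfJ_l$, so the invariance $\sfJ_l(\Omg)=\Omg$ forces these pairs to be $L^2(\Omg)$-orthogonal. Grouping $V=U_1\oplus U_2\oplus\dots\oplus U_d$ with $U_1:=\mathrm{span}\{\cos(cx_1),\sin(cx_1)\}$ and $U_l:=\mathrm{span}\{\sin(cx_l)\}$, $l\ge2$, the third fact makes the $U_l$ mutually $L^2$-orthogonal, so for $w=\sum_lw_l\in V$ the first two facts give $\frh_{\rm N}[w]=\sum_l\frh_{\rm N}[w_l]=\lm_k\sum_l\|w_l\|^2=\lm_k\|w\|^2$. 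Consequently, for $u=\sum_ja_ju_j+w$ with $w\in V$, the mixed terms cancel exactly and $\frh_{\rm N}[u]-\lm_k\|u\|^2=\sum_j(\lm_j-\lm_k)|a_j|^2\le0$.

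Finally I would verify $\dim W=k+d+1$, i.e.\ $V\cap\mathrm{span}\{u_1,\dots,u_k\}=\{0\}$ (the $d+1$ supplementary functions, involving distinct variables, are already linearly independent on the nonempty open set $\Omg$, so $\dim V=d+1$). If $0\ne w\in V$ lay in $\mathrm{span}\{u_j\}\subset H^2_0(\Omg)$, then $w\in H^2_0(\Omg)$ with $\Delta w=-c^2w$ in $\Omg$; extending $w$ by zero to $\widetilde w\in H^2(\dR^d)$ one has $\Delta\widetilde w\in L^2(\dR^d)$ and $(\Delta+c^2)\widetilde w=0$ a.e.\ on $\Omg$ and on $\dR^d\setminus\overline\Omg$, hence on all of $\dR^d$; taking Fourier transforms, $\widehat{\widetilde w}$ is supported on the sphere $\{|\xi|=c\}$, which has measure zero, so $\widetilde w=0$, a contradiction. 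Thus $\dim W=k+d+1$ and $\mu_{k+d+1}\le\lm_k$ follows from the min--max principle.

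The step I expect to be decisive is the choice of $V$: at the unconstrained variable $x_1$ there are two ``free'' dimensions on which $\frh_{\rm N}$ already equals $\lm_k\langle\cdot,\cdot\rangle$, and at the remaining variables the symmetries $\sfJ_l(\Omg)=\Omg$ are exactly what is needed to make the functions $\sin(cx_l)$ mutually orthogonal (and orthogonal to $U_1$), so that the off-diagonal blocks of $\frh_{\rm N}$ — which merely vanish instead of equalling $\lm_k\langle\cdot,\cdot\rangle$ — do no harm; without the symmetries one cannot afford more than the two functions of $x_1$, matching the weaker general bound. The integration-by-parts, Fourier, and independence arguments are routine; the only mild subtlety is the vanishing of boundary terms on a merely Lipschitz domain, handled by the $C^\infty_0$-approximation built into the definition of $H^2_0(\Omg)$.
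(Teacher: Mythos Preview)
Your argument is correct and follows essentially the same route as the paper's own proof: the identical choice of trial subspace spanned by the first $k$ Dirichlet eigenfunctions together with $\cos(cx_1),\sin(cx_1),\sin(cx_2),\dots,\sin(cx_d)$, the same use of the diagonal Hessian structure and of the reflection symmetries to obtain the needed $L^2$-orthogonalities, and the same min--max conclusion. The only (inessential) differences are technical: the paper cites a unique-continuation lemma where you give the Fourier-sphere argument directly, and the paper invokes Grisvard's integration-by-parts formula using $\partial_j u\in H^1_0(\Omega)$ rather than approximation by $C^\infty_0$; your phrase ``involving distinct variables'' for the linear independence of the $d+1$ supplementary functions is slightly imprecise (two of them depend on $x_1$), but the claim itself is of course correct.
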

The special role of the $x_1$-axis
in the formulation of Theorem~\ref{thm2} is not restrictive. Since we are allowed
to translate and rotate the domain $\Omg$, we only require that $\Omg$ is symmetric with respect to $d-1$ mutually orthogonal hyperplanes. 
In two dimensions, the condition in Theorem~\ref{thm2} is equivalent to the fact that $\Omg$ has an axis of symmetry.
Theorem~\ref{thm2} applies, in particular, to ellipses, rectangles, isosceles triangles, and annuli. In three dimensions, Theorem~\ref{thm2} can be applied, for example, to the cylindrical domain $\Omg = (0,h)\times \omg\subset\dR^3$ of some positive height $h > 0$ and with the cross-section $\omg\subset\dR^2$ being a bounded connected Lipschitz domain with an axis of symmetry. 

The proofs of Theorems~\ref{thm1} and~\ref{thm2} rely on the min-max principle, for which we construct a subspace of trial functions using a modification of the approach suggested by Filonov in~\cite{F05}
for the proof of a similar inequality between Dirichlet and Neumann eigenvalues of the Laplacian. The key idea is to construct a trial subspace for $\sfH_{\rm N}$  
as a span of $k$ orthonormal eigenfunctions of $\sfH_{\rm D}$ corresponding to its first $k$ eigenvalues and of $d$ additional functions for the proof of Theorem~\ref{thm1}
and, respectively, $d+1$ additional functions for the proof of Theorem~\ref{thm2}. These additional functions are constructed in a different way than in the paper~\cite{Pr19}. Our construction better reflects the properties of the biharmonic operator.  
Any non-trivial function $u$ from the span of the additional functions
belongs to $H^2(\Omg)$, is linearly independent from the span of $k$ orthonormal eigenfunctions of $\sfH_{\rm D}$ corresponding to its first $k$ eigenvalues, and satisfies the identities
\begin{equation}\label{eq:properties}
	\Delta^2 u = \lm_k u\qquad\text{and}\qquad\sum_{i,j=1}^d\|\p_{ij}u\|^2_{L^2(\Omg)} = \lm_k\|u\|^2_{L^2(\Omg)}.
\end{equation}
The second property in~\eqref{eq:properties}
is achieved by the construction of the additional functions satisfying a number of
orthogonality relations. In the proof of Theorem~\ref{thm1} we make use of the Borsuk-Ulam theorem to achieve the required orthogonality. In the proof of Theorem~\ref{thm2} these relations are satisfied thanks to the rich symmetry properties of the domain.

In order to position our work in the existing literature we review known results on inequalities between Dirichlet and Neumann eigenvalues.
Such inequalities is a classical topic for the Laplace operator. We will denote by $\{\wh\lm_k\}_{k\ge 1}$ and $\{\wh\mu_k\}_{k\ge 1}$ the eigenvalues of the Laplace operator on a bounded domain with, respectively, Dirichlet and Neumann boundary conditions, enumerated in the non-decreasing order and repeated with multiplicities taken into account.
P\'{o}lya proved
in~\cite{P52} that $\wh\mu_2 < \wh\lm_1$ in two dimensions for smooth domains. For convex, two-dimensional domains with $C^2$-boundary Payne proved
in~\cite{P55} that $\wh\mu_{k+2} < \wh\lm_k$ for all 
$k \in \dN$. The result of Payne was  generalized by Levine and Weinberger \cite{LW86}, who  proved the
inequality $\wh\mu_{k+d} \le \wh\lm_k$ for all $k\in\dN$, for any convex domain. For general (not necessarily convex)
bounded $C^1$-smooth domains in any space dimension the inequality $\wh\mu_{k+1}\le \wh\lm_k$ for all $k\in\dN$ was obtained by Friedlander in~\cite{F91}. Filonov~\cite{F05} simplified the argument by Friedlander and showed that
the strict inequality $\wh\mu_{k+1} < \wh\lm_k$ holds for all $k\in\dN$, for a class of domains even less regular than Lipschitz. 
Similar type inequalities are also later obtained for other types of differential operators: see \cite{FL10} for the sub-Laplacian on the Heisenberg group and the magnetic Laplacian, \cite{LR17} for the Laplacian with mixed boundary conditions,  and \cite{DT22} for the Stokes operator.
The present paper belongs to the same line of research. 

It remains to outline the structure of the paper. In Section~\ref{sec:pre} we collect  preliminary facts and auxiliary lemmas, which will later be used in the proofs of the main results. In the same preliminary section, we also provide more details on the Dirichlet and Neumann spectral problems for the biharmonic operator.  In Section~\ref{sec:proof1} we prove Theorem~\ref{thm1} and, finally, in Section~\ref{sec:proof2} we prove Theorem~\ref{thm2}. The proofs of these two theorems share common ideas, but we prefer to provide both arguments in full detail for the convenience of the reader.
  
\section{Preliminaries}\label{sec:pre}
In this preliminary section we collect a number of tools used in the proofs of the main results. First, in Subsection~\ref{ssec:biharmonic} we recall known properties of the biharmonic operators with Dirichlet and Neumann boundary conditions.
Next, in Subsection~\ref{ssec:unique} we state a lemma based on the unique continuation principle for the Laplace operator.
Finally, in Subsection~\ref{ssec:BU}
we present an auxiliary construction
based on the Borsuk-Ulam theorem of a special family of orthogonal functions. This family will be used in the proof of Theorem~\ref{thm1}.

\subsection{Dirichlet and Neumann biharmonic operators}\label{ssec:biharmonic}
Recall that $\Omg\subset\dR^d$, $d\ge 2$, is a bounded connected Lipschitz domain and that the biharmonic operators $\sfH_{\rm D}$ and $\sfH_{\rm N}$ with Dirichlet and Neumann boundary conditions are associated with the quadratic forms $\frh_{\rm D}$ and $\frh_{\rm N}$, respectively, defined in~\eqref{eq:forms}.
\begin{remark}\label{rem:BC}
	In this remark we will provide strong formulations of the spectral problems for $\sfH_{\rm D}$ and $\sfH_{\rm N}$. 	
	It is not difficult to verify via integration by parts that the spectral problem	for the Dirichlet biharmonic operator $\sfH_{\rm D}$ can be written
	in the strong formulation as
	\begin{equation}\label{eq:Dstrong}
	\begin{cases}
	\Delta^2 u = \lm u,&\qquad \text{in}\,\,\Omg,\\
	u = 0,&\qquad\text{on}\,\,\p\Omg,\\
	\frac{\p u}{\p \nu} = 0,&\qquad\text{on}\,\,\p\Omg,
	\end{cases}
	\end{equation}
	where $\frac{\p}{\p \nu}$ stands for the normal derivative on the boundary with the normal pointing outwards of $\Omg$ and where $\lm$ is the spectral parameter.
	This strong formulation of the spectral problem for $\sfH_{\rm D}$ means that
	a non-trivial function $u\in H^2(\Omg)$ is an eigenfunction of $\sfH_{\rm D}$  if, and only if, it satisfies the system ~\eqref{eq:Dstrong} with a certain value of the spectral parameter $\lm$.
	
	The strong formulation of the spectral problem for the Neumann biharmonic operator involves a rather complicated boundary condition and it was derived in~\cite[Proposition 5]{C11} (see also~\cite{BK22}). Assuming, in addition,
	that $\Omg$ is a $C^\infty$-smooth domain, the spectral problem for $\sfH_{\rm N}$ can be written as
	\begin{equation}\label{eq:Nstrong}
	\begin{cases}
	\Delta^2 u = \mu u,&\qquad \text{in}\,\,\Omg,\\[0.2ex]
	\frac{\p^2u}{\p\nu^2} = 0,&\qquad\text{on}\,\,\p\Omg,\\[0.2ex]
	\frac{\p(\Delta u)}{\p \nu} + \div_{\!\p\Omg}\big(P_{\p\Omg}\big[(D^2 u)\nu\big]\big) = 0,&\qquad\text{on}\,\,\p\Omg,
	\end{cases}
	\end{equation}
	where
	$\frac{\p^2}{\p \nu^2}$ stands for the second-order normal derivative on the boundary, $\div_{\p\Omg}$
	is the surface divergence on $\p\Omg$, $P_{\p\Omg}$ orthogonally projects the vector in $\dR^d$ at $x\in\p\Omg$ into the tangent plane of $\p\Omg$ at $x$, $D^2u$ is the Hessian of the function $u$, $\nu$ stands for the outer unit normal vector for $\Omg$, and $\mu$ is the spectral parameter. It can be checked that any eigenfunction of $\sfH_{\rm N}$ on a bounded $C^\infty$-smooth domain belongs to $C^\infty(\ov\Omg)$ (see~\cite[Proposition 2]{C11}).
   The above strong formulation of the spectral problem for $\sfH_{\rm N}$ on a bounded $C^\infty$-smooth domain means that a non-trivial function $u\in H^2(\Omg)$ is an eigenfunction of $\sfH_{\rm N}$ if, and only if, $u$ belongs to $C^\infty(\ov\Omg)$ and satisfies the equations in~\eqref{eq:Nstrong} with a certain value of the spectral parameter $\mu$.
   We also remark that as an interpolation between Dirichlet and Neumann boundary conditions, the Robin boundary conditions for the biharmonic operator were recently introduced~\cite{BK22, CL20, L23}. 
\end{remark}
Recall that $\{\lm_k\}_{k\ge1}$
and $\{\mu_k\}_{k\ge1}$ denote the eigenvalues of the biharmonic operators $\sfH_{\rm D}$ and $\sfH_{\rm N}$, respectively, enumerated in the non-decreasing order and repeated with multiplicities taken into account.
By the min-max principle~\cite[\S 4.5]{D95} (see also ~\cite[Theorem 1.28]{FLW23}) these eigenvalues can be characterised as follows
\begin{align}
	\label{eq:minmaxD}
	\lm_k &= 
	\inf_{\begin{smallmatrix}\cL\subset H^2_0(\Omg)\\ \dim\cL = k\end{smallmatrix} }\sup_{u\in \cL\sm\{0\}} 
	\frac{\sum_{i,j=1}^d\|\p_{ij} u\|^2_{L^2(\Omg)}}{\|u\|^2_{L^2(\Omg)}},\qquad k\in\dN,\\ 
	\label{eq:minmaxN}\mu_k & = 
	\inf_{\begin{smallmatrix}\cL\subset H^2(\Omg)\\ \dim\cL = k\end{smallmatrix} }\sup_{u\in \cL\sm\{0\}} 
	\frac{\sum_{i,j=1}^d\|\p_{ij} u\|^2_{L^2(\Omg)}}{\|u\|^2_{L^2(\Omg)}},\qquad k\in\dN, 
\end{align}
where the infima are taken with respect to $k$-dimensional linear subspaces of the respective Sobolev spaces. Moreover, the infimum in~\eqref{eq:minmaxD} is attained on the span of $k$ orthonormal eigenfunctions of $\sfH_{\rm D}$ corresponding to the eigenvalues $\lm_1,\lm_2,\dots,\lm_k$ and, in this case, the supremum is attained when $u$ is an eigenfunction of $\sfH_{\rm D}$ corresponding to the eigenvalue $\lm_k$.  
Analogously, the infimum in~\eqref{eq:minmaxN} is attained on the span of $k$ orthonormal eigenfunctions of $\sfH_{\rm N}$ corresponding to the eigenvalues $\mu_1,\mu_2,\dots,\mu_k$ and, in this case, the supremum is attained when $u$ is an eigenfunction of $\sfH_{\rm N}$ corresponding to $\mu_k$.
In particular, for the Dirichlet biharmonic operator we get that for any $k\in\dN$ there exists a linear subspace $\cL\subset H^2_0(\Omg)$ with $\dim\cL = k$ such that
\begin{equation}\label{eq:cL}
	\frh_{\rm D}[u] \le \lm_k\|u\|^2_{L^2(\Omg)},\qquad\text{for all}\,\,u\in\cL.
\end{equation}
This linear subspace $\cL$ is constructed as a span of $k$ orthonormal eigenfunctions of $\sfH_{\rm D}$ corresponding to its first $k$ eigenvalues.
\begin{remark}
	Let us introduce the characteristic function $\chi_\Omg$ of $\Omg$
	and the functions $f_k(x) := x_k$ for $k\in \{1,2,\dots,d\}$. Clearly, the family of functions $\{\chi_\Omg,f_1,f_2,\dots,f_d\}$ is linearly independent in $L^2(\Omg)$. According to \cite[Theorem 7]{Pr18} we have the following characterisation
	\[
		\ker\sfH_{\rm N} = {\rm span}\,\big\{\chi_\Omg,f_1,f_2,\dots, f_d\big\}.
	\]
	Hence, we conclude that $\mu_1 = \mu_2 = \dots = \mu_{d+1} = 0$ and that $\mu_{d+2} > 0$. It can also be easily checked that $\lm_1 > 0$.
	For $k = 1$ the inequality in Theorem~\ref{thm1} 
	naturally follows from these simple observations. The inequality in Theorem~\ref{thm1} becomes non-trivial for $k > 1$.   
\end{remark}
\subsection{A unique continuation lemma}
\label{ssec:unique}
In this short subsection we recall a useful result based on the unique continuation for the Laplace operator. 
\begin{lem}\cite[Proposition 2.5]{BR12}\label{lem:BR}
	Let $\Omg\subset\dR^d$ be a bounded connected Lipschitz domain.
	Let $\lm\in\dR$ and let $u\in H^2_0(\Omg)$ be such that $-\Delta u =\lm u$ in $\Omg$. Then $u\equiv 0$.
\end{lem}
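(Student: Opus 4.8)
The statement to prove is Lemma~\ref{lem:BR}: if $u \in H^2_0(\Omega)$ and $-\Delta u = \lambda u$ in $\Omega$, then $u \equiv 0$, for $\Omega$ a bounded connected Lipschitz domain.

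\medskip

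\textbf{Proof proposal.} The plan is to combine the boundary behaviour encoded in $H^2_0(\Omega)$ with the unique continuation principle for the Laplacian. First I would extend $u$ by zero to all of $\mathbb{R}^d$, calling the extension $\widetilde u$. Since $u \in H^2_0(\Omega)$, it is the $H^2$-limit of functions in $C^\infty_0(\Omega)$; each of those extends by zero to a $C^\infty_0(\mathbb{R}^d)$ function, and the zero-extension map is bounded from $H^2_0(\Omega)$ into $H^2(\mathbb{R}^d)$ (indeed into $H^2_0(\Omega) \subset H^2(\mathbb{R}^d)$ viewing things globally). Hence $\widetilde u \in H^2(\mathbb{R}^d)$, with $\widetilde u = 0$ on $\mathbb{R}^d \setminus \overline{\Omega}$.

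\medskip

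Next I would check that $\widetilde u$ solves $-\Delta \widetilde u = \lambda \widetilde u$ on all of $\mathbb{R}^d$ in the distributional (equivalently, since $\widetilde u \in H^2$, the strong $L^2$) sense. This requires that no singular contribution (a distribution supported on $\partial\Omega$) appears when one differentiates the zero-extension twice. The point is that $H^2_0(\Omega)$ functions have vanishing trace \emph{and} vanishing normal-derivative trace on $\partial\Omega$; for a Lipschitz boundary this is exactly the condition ensuring that $\widetilde u$ and $\nabla \widetilde u$ have no jump across $\partial\Omega$, so that $\Delta \widetilde u$ computed globally equals the zero-extension of $\Delta u$. Therefore $-\Delta \widetilde u = \lambda \widetilde u$ holds on $\mathbb{R}^d$, and in particular on the connected open set $\mathbb{R}^d \setminus \overline{\Omega}$ (or on any ball meeting both $\Omega$ and its exterior) the function $\widetilde u$ is a solution vanishing on a nonempty open set.

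\medskip

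Now I would invoke the weak unique continuation property for $-\Delta - \lambda$: a function in $H^2_{\mathrm{loc}}$ satisfying $-\Delta v = \lambda v$ on a connected open set $U$ and vanishing on a nonempty open subset of $U$ must vanish identically on $U$. Applying this with $U$ a connected open neighbourhood of a boundary point that straddles $\partial\Omega$ — or more globally, noting $\widetilde u$ solves the equation on all of $\mathbb{R}^d$ and vanishes on the nonempty open set $\mathbb{R}^d\setminus\overline\Omega$ — we conclude $\widetilde u \equiv 0$ on $\mathbb{R}^d$, hence $u \equiv 0$ on $\Omega$. (Connectedness of $\Omega$ is used to propagate the vanishing through all of $\Omega$.)

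\medskip

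\textbf{Main obstacle.} The delicate step is the second one: justifying that zero-extension of an $H^2_0(\Omega)$ function across a merely Lipschitz boundary produces a global $H^2$ solution with \emph{no} distributional defect on $\partial\Omega$. This hinges on the precise characterisation of $H^2_0(\Omega)$ on Lipschitz domains as the functions whose order-$0$ and order-$1$ traces vanish, together with the corresponding Green/jump formula on Lipschitz domains. Everything else — the extension bound and the appeal to unique continuation for the Laplacian — is standard. In the write-up I would either cite the density/trace characterisation of $H^2_0$ on Lipschitz domains directly, or argue by approximation with $C^\infty_0(\Omega)$ functions, for which the zero-extension is manifestly in $H^2(\mathbb{R}^d)$ and solves nothing singular, and pass to the limit.
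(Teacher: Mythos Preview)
The paper does not prove Lemma~\ref{lem:BR}; it simply quotes it from \cite[Proposition~2.5]{BR12}. Your argument is correct and is essentially the standard proof (and the one in \cite{BR12}): zero-extend to $H^2(\dR^d)$, observe the extension solves $-\Delta\widetilde u = \lm\widetilde u$ on all of $\dR^d$, and apply unique continuation using that $\widetilde u$ vanishes on the nonempty open set $\dR^d\setminus\overline\Omg$.

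One remark: the ``main obstacle'' you flag dissolves via the approximation route you already mention. Take $\phi_n\in C^\infty_0(\Omg)$ with $\phi_n\to u$ in $H^2(\Omg)$; their zero-extensions lie in $C^\infty_0(\dR^d)$ and form a Cauchy sequence in $H^2(\dR^d)$, so $\widetilde u\in H^2(\dR^d)$ directly. Then $\Delta\widetilde u\in L^2(\dR^d)$ automatically, so no singular layer on $\partial\Omg$ can arise; since $-\Delta\widetilde u=\lm\widetilde u$ holds a.e.\ on $\Omg$ and on $\dR^d\setminus\overline\Omg$, whose union has full measure, it holds on $\dR^d$. No trace characterisation or jump formula is needed. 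Incidentally, connectedness of $\Omg$ plays no role in this argument: unique continuation is applied on the connected set $\dR^d$.
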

This lemma is essentially equivalent to the fact no eigenfunction of the Dirichlet Laplacian on $\Omg$ can simultaneously satisfy the Neumann boundary condition on $\p\Omg$. It will be used to show that the
additional functions in the construction of the trial subspace for the min-max principle in the proofs of the main results are linearly independent from the
spans of the eigenfunctions of the Dirichlet biharmonic operator.
\subsection{A construction of special orthogonal functions}
\label{ssec:BU}
We will present a construction of $d$
(dimension of $\Omg$) mutually orthogonal functions in the Hilbert space $L^2(\Omg)$. This construction is used in the proof of Theorem~\ref{thm1}. To this aim we recall the definition of an odd
continuous function on the $n$-dimensional unit sphere $\dS^n\subset\dR^{n+1}$, $n\in\dN$, centred at the origin.
\begin{dfn}\label{def:odd}
	For $n\in\dN$, a continuous function $g\colon\dS^n\arr\dR^n$ is called \emph{odd} if $g(\tt) = -g(-\tt)$ for any
	$\tt\in\dS^n$.
\end{dfn}
Next, we will formulate the Borsuk-Ulam theorem for odd functions.
\begin{prop}\cite[Theorem 2.1.1]{M03}
	\label{prop:Borsuk}
	For any $n\in\dN$ and any continuous odd function $g\colon \dS^n\arr\dR^n$ there exists a point $\tt_\star\in\dS^n$ such that $g(\tt_\star) = 0$.
\end{prop}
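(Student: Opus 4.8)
The statement is the classical Borsuk--Ulam theorem, here in the ``an odd map $\dS^n\arr\dR^n$ must vanish somewhere'' form; in the paper it is quoted from~\cite[Theorem~2.1.1]{M03}, and accordingly the plan is to record it with that citation rather than to reprove it. Were a self-contained argument wanted, I would first make the standard reduction to a non-existence statement. Suppose, for contradiction, that $g\colon\dS^n\arr\dR^n$ is continuous, odd and nowhere zero; then $h:=g/|g|\colon\dS^n\arr\dS^{n-1}$ is continuous and odd, so it suffices to prove that there is \emph{no} continuous odd map from $\dS^n$ to $\dS^{n-1}$.

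I would then argue by induction on $n$. For $n=1$ such an $h$ would be a continuous map from the connected space $\dS^1$ into the discrete two-point set $\dS^0$, hence constant, contradicting $h(-\tt)=-h(\tt)$. For $n\ge 2$, pass to real projective spaces: an odd $h$ descends to a continuous map $\bar h\colon\R\mathrm{P}^n\arr\R\mathrm{P}^{n-1}$ under which the antipodal double cover $\dS^{n-1}\arr\R\mathrm{P}^{n-1}$ pulls back to the antipodal double cover $\dS^n\arr\R\mathrm{P}^n$. By naturality of the characteristic class of a double cover, $\bar h$ therefore pulls the nonzero class $\alpha\in H^1(\R\mathrm{P}^{n-1};\mathbb{Z}/2\mathbb{Z})$ back to the nonzero class $\beta\in H^1(\R\mathrm{P}^n;\mathbb{Z}/2\mathbb{Z})$. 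Using the ring isomorphism $H^*(\R\mathrm{P}^m;\mathbb{Z}/2\mathbb{Z})\cong(\mathbb{Z}/2\mathbb{Z})[\alpha]/(\alpha^{m+1})$ one has $\alpha^n=0$ in $H^n(\R\mathrm{P}^{n-1};\mathbb{Z}/2\mathbb{Z})$, while $\beta^n\ne 0$ in $H^n(\R\mathrm{P}^n;\mathbb{Z}/2\mathbb{Z})$; applying $\bar h^*$ gives $0=\bar h^*(\alpha^n)=(\bar h^*\alpha)^n=\beta^n\ne 0$, a contradiction. (Alternatively, one may invoke the odd-degree theorem — a continuous odd self-map of $\dS^n$ has odd, hence nonzero, topological degree, whereas a map factoring through $\dS^{n-1}$ is null-homotopic — or give a purely combinatorial proof via Tucker's lemma, the $\mathbb{Z}/2\mathbb{Z}$-analogue of Sperner's lemma.)

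The only genuinely nontrivial ingredient is the passage from the purely local hypothesis — pointwise oddness of $h$ — to a global obstruction; this step, encoded either in the cup-length computation over $\mathbb{Z}/2\mathbb{Z}$, in the parity of the degree, or in the combinatorics of Tucker's lemma, is what I would expect to be the crux, everything else being routine bookkeeping about covering spaces and induction. Since the result is entirely standard, in the paper I would simply state it as Proposition~\ref{prop:Borsuk} with the reference to~\cite{M03} and move on to its use in constructing the $d$ mutually orthogonal trial functions needed for the proof of Theorem~\ref{thm1}.
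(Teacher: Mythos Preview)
Your proposal is correct and matches the paper exactly: Proposition~\ref{prop:Borsuk} is stated with a citation to~\cite[Theorem~2.1.1]{M03} and is not reproved, so there is nothing to compare beyond the fact that you correctly anticipated this. The cohomological sketch you supply is a standard and valid proof of Borsuk--Ulam, but it is extraneous to the paper's own treatment.
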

Now we are ready to present the construction of the orthogonal functions. In the formulation and the proof of this lemma we will use the standard notation $x\cdot y = \sum_{j=1}^d x_j y_j$ for
the scalar product in $\dR^d$ of vectors $x = (x_1,x_2,\dots, x_d)$ and $y = (y_1,y_2,\dots,y_d)$.
\begin{lem}\label{lem:aux}
	Let $\Omg\subset\dR^d$, $d\ge 2$, be a bounded connected Lipschitz domain.
	For any $\lm > 0$ one can find $d$ vectors $\omg_l\in\dR^d$ with $|\omg_l|^4 = \lm$ for $l \in\{1,2,\dots,d\}$ such that the functions
	\[
		v_l(x) := \sin(\omg_l\cdot x)\in L^2(\Omg),\qquad l\in\{1,2,\dots,d\} 
	\]
	satisfy the orthogonality relations
	\begin{equation}\label{eq:ortho}
		\int_\Omg v_i(x) v_j(x) \dd x = 0,\qquad i\ne j.
	\end{equation}
\end{lem}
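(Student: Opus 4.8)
The plan is to parametrise the candidate frequency vectors by a single angular parameter on a sphere and then use the Borsuk-Ulam theorem (Proposition~\ref{prop:Borsuk}) to kill all $\binom{d}{2}$ orthogonality conditions at once. Fix $\lm>0$ and set $r:=\lm^{1/4}$, so that every admissible vector has Euclidean length $r$. The idea is to look for the $d$ vectors $\omg_1,\dots,\omg_d$ of the form $\omg_l = r\,e_l(\tt)$, where $e_1(\tt),\dots,e_d(\tt)$ is an orthonormal frame of $\dR^d$ depending continuously on a point $\tt$ in some sphere $\dS^n$, chosen so that $e_l(-\tt)$ is a signed rearrangement of $e_l(\tt)$; then $v_l(x)=\sin(\omg_l\cdot x)$ changes only by a sign when $\tt\mapsto-\tt$, so each product $v_i v_j$ is an \emph{even} function of $\tt$, and hence the map
\[
	G(\tt) := \Big(\textstyle\int_\Omg v_i(x)v_j(x)\,\dd x\Big)_{1\le i<j\le d}\in\dR^{\binom{d}{2}}
\]
is \emph{even} in $\tt$ as well. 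This is the wrong parity for a naive application of Borsuk-Ulam, so the actual construction must be arranged so that the relevant map is \emph{odd}; the cleanest way to do this is to let $\tt$ range over $\dS^{\binom{d}{2}}$ itself and build $G$ directly as an odd map. Concretely, I would start from a fixed orthonormal basis $\omg_1^0,\dots,\omg_d^0$ of length $r$, parametrise nearby orthonormal frames by a skew-symmetric matrix $A$ via $\omg_l(A) := r\,(\exp A)\,\omg_l^0/r$, note that the space of skew-symmetric $d\times d$ matrices has dimension $\binom{d}{2}$, and consider the continuous map $A\mapsto G(A)$ from the unit sphere in that space into $\dR^{\binom{d}{2}}$. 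Replacing $A$ by $-A$ interchanges $\exp A$ with its inverse, which does not immediately give oddness of $G$; to obtain a genuinely odd map one instead uses that, after an appropriate pairing, swapping two basis vectors of the frame changes the sign of exactly the corresponding inner product while permuting the others, and composes these reflections with the antipodal map on $\dS^{\binom{d}{2}}$. Once $G\colon\dS^{\binom{d}{2}}\to\dR^{\binom{d}{2}}$ is exhibited as continuous and odd, Proposition~\ref{prop:Borsuk} (applied with $n=\binom{d}{2}$) yields a point $\tt_\star$ with $G(\tt_\star)=0$, and the associated frame gives the desired vectors $\omg_1,\dots,\omg_d$ with $|\omg_l|^4=r^4=\lm$ and all orthogonality relations~\eqref{eq:ortho} satisfied.

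The step I expect to be the genuine obstacle is precisely engineering the parametrisation so that the target map has the \emph{odd} symmetry required by Borsuk-Ulam: the products $v_iv_j$ are manifestly even under $\omg_l\mapsto-\omg_l$, so sign changes of individual frequency vectors are useless, and one must instead exploit that $\dim\{\text{skew-symmetric }d\times d\}=\binom{d}{2}$ coincides with the number of constraints and find a free $\dS^{\binom{d}{2}}$-action whose effect on $G$ is componentwise sign reversal. I would handle $d=2$ as the motivating base case (there $\binom{d}{2}=1$, $\dS^1$ parametrises rotations $\omg_1=r(\cos\ph,\sin\ph)$, $\omg_2=r(-\sin\ph,\cos\ph)$, the single integral $\int_\Omg v_1v_2\,\dd x$ is a continuous function of $\ph\in\dS^1$, and one shows it is odd under the appropriate half-turn, so it has a zero), and then set up the general $d$ exactly so as to reduce to this mechanism coordinate-pair by coordinate-pair. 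The remaining points are routine: $v_l\in L^2(\Omg)$ because $\Omg$ is bounded and $\sin$ is bounded; continuity of $G$ in $\tt$ follows from dominated convergence since $|v_iv_j|\le1$ on the bounded set $\Omg$; and no regularity of $\p\Omg$ beyond boundedness is used, so the Lipschitz hypothesis is not even needed here — it is only invoked elsewhere.
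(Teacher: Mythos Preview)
Your proposal has a genuine gap at exactly the point you yourself flag: you never actually produce an odd map. When all $d$ vectors $\omg_1(\tt),\dots,\omg_d(\tt)$ vary simultaneously with $\tt$, each orthogonality functional $\int_\Omg v_i v_j$ involves \emph{two} factors depending on $\tt$. Under any antipodal action that sends $\omg_l\mapsto\pm\omg_l$, the product $v_iv_j$ picks up the sign $(\pm)_i(\pm)_j$; for $G$ to be odd you would need every pair $(i,j)$ to receive a minus sign, i.e.\ exactly one of $\omg_i,\omg_j$ flips for every pair. For $d\ge3$ this is impossible (if $\omg_1$ flips and $\omg_2$ does not, then $\omg_3$ must not flip to make $(1,3)$ odd, but then $(2,3)$ is even). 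Swapping $\omg_i\leftrightarrow\omg_j$ does not help either: it fixes the $(i,j)$ component and merely permutes the others. Your $d=2$ sketch has the same defect: with $\omg_1=r(\cos\ph,\sin\ph)$, $\omg_2=r(-\sin\ph,\cos\ph)$, the half-turn $\ph\mapsto\ph+\pi$ sends $(\omg_1,\omg_2)\mapsto(-\omg_1,-\omg_2)$, so $v_1v_2$ is unchanged and the map is even, not odd. The appeal to $\dim\{\text{skew-symmetric}\}=\binom{d}{2}$ is a coincidence that does not supply the required parity.

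The paper's remedy is to abandon the simultaneous construction and build the $\omg_l$ \emph{sequentially}. Pick $\omg_1$ with $|\omg_1|^4=\lm$ arbitrarily. Having fixed $v_1,\dots,v_{l-1}$, define
\[
g_l\colon\dS^{d-1}\arr\dR^{d-1},\qquad
g_l(\tt):=\Big((v_1,\sin(\lm^{1/4}\tt\cdot x))_{L^2(\Omg)},\dots,(v_{l-1},\sin(\lm^{1/4}\tt\cdot x))_{L^2(\Omg)},0,\dots,0\Big).
\]
Now only the \emph{second} factor in each inner product depends on $\tt$, and $\sin(\lm^{1/4}(-\tt)\cdot x)=-\sin(\lm^{1/4}\tt\cdot x)$, so $g_l$ is genuinely odd. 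Since $l-1\le d-1$, Proposition~\ref{prop:Borsuk} applies on $\dS^{d-1}$ and yields $\tt_l$ with $g_l(\tt_l)=0$; set $\omg_l:=\lm^{1/4}\tt_l$ and continue. This is also why the lemma gives exactly $d$ functions and no more: at the $(d{+}1)$-st step one would need $d$ conditions on $\dS^{d-1}$, which Borsuk--Ulam does not provide. Your observations about $v_l\in L^2(\Omg)$, continuity via dominated convergence, and the irrelevance of the Lipschitz hypothesis are all correct.
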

\begin{proof}
	The construction proceeds in $d$ steps. In the first step we take an arbitrary vector $\omg_1\in\dR^d$ satisfying $|\omg_1|^4 = \lm$
	and by that we also fix the function $v_1(x) :=\sin(\omg_1\cdot x)\in L^2(\Omg)$.
	
	In the second step, in order to construct the second vector $\omg_2$ we consider the function 
	\[
		g_2\colon\dS^{d-1}\arr\dR^{d-1},\qquad g_2(\tt) := 
		\big(\big(v_1,\sin(\lm^{\frac14}(\tt\cdot x)\big)_{L^2(\Omg)},
		0,0,	\dots,
		0)^\top.
	\]
	By its construction the function $g_2$ is a continuous odd function in the sense of Definition~\ref{def:odd} (with $n = d-1$). Indeed,
	we clearly get by the Lebesgue dominated convergence theorem that
	\[
		\int_\Omg v_1(x)\sin(\lm^{\frac14}(\tt\cdot x))\dd x \arr \int_\Omg v_1(x)\sin(\lm^{\frac14}(\tt'\cdot x))\dd x,\qquad \text{as}\,\,\tt\arr\tt'\,\, \text{in}\,\, \dS^{d-1},
	\]
	because the integrands converge pointwise and the characteristic function $\chi_\Omg$ of $\Omg$ is the integrable majorant. The relation $g_2(\tt) = -g_2(-\tt)$ follows from the fact that $\sin(\lm^\frac14(-\tt\cdot x)) = -\sin(\lm^\frac14(\tt\cdot x))$.
	Hence, by Proposition~\ref{prop:Borsuk} there exists a point $\tt_2\in\dS^{d-1}$ such that $g_2(\tt_2) = 0$. We set $\omg_2 = \tt_2\lm^{\frac14}$
	and in this way the function $v_2(x) = \sin(\omg_2\cdot x)$ is also fixed.
	
	In the $l$-th step ($l\le d$), in order to construct the vector $\omg_l$ we consider the function $g_l\colon\dS^{d-1}\arr\dR^{d-1}$
	defined by
	\[
		g_l(\tt) := \Big((v_1,
		\sin(\lm^\frac14(\tt\cdot x))_{L^2(\Omg)},
		(v_2,
		\sin(\lm^\frac14(\tt\cdot x))_{L^2(\Omg)}\dots,
		(v_{l-1},
		\sin(\lm^\frac14(\tt\cdot x))_{L^2(\Omg)},0,\dots,0\Big)^\top.
	\]
	Analogously we find that $g_l$ is a continuous odd function in the sense of Definition~\ref{def:odd}. Hence, by Proposition~\ref{prop:Borsuk}
	there exists $\tt_l\in\dS^d$ such that $g_l(\tt_l) = 0$. We set $\omg_l := \lm^\frac14\tt_l$ and define $v_l(x) :=\sin(\omg_l\cdot x)$. We repeat this step until $l = d$. The orthogonality conditions in~\eqref{eq:ortho} are satisfied
	thanks to the choice of the points $\tt_l \in\dS^{d-1}$, $l\in\{2,3,\dots,d\}$.
\end{proof}

\begin{remark}
	It is clearly seen from the proof of Lemma~\ref{lem:aux} that it is not, in general, possible to construct more than $d$ orthogonal functions of the same type
	using our approach based on the Borsuk-Ulam theorem.
\end{remark}

\section{Proof of Theorem~\ref{thm1}}\label{sec:proof1}
The proof is divided into two steps. In the first step we construct a trial subspace of functions, which is then used in the second step together with the min-max principle.
  
\noindent{\it Step 1: construction of a trial subspace.}
Let $k\in\dN$ and let us fix the shorthand abbreviation $\lm = \lm_k$ for the $k$-th eigenvalue of the biharmonic operator $\sfH_{\rm D}$ on $\Omg$ with Dirichlet boundary conditions.
By the consequence~\eqref{eq:cL} of the min-max principle there is a $k$-dimensional linear subspace $\cL_\lm$ of $H^2_0(\Omg)$ such that
\begin{equation}\label{eq:cL2}
	\frh_{\rm D}[u] \le \lm \|u\|^2_{L^2(\Omg)},\qquad \text{for all}\,\,u\in\cL_\lm.
\end{equation}
Let the vectors $\omg_l\in\dR^d$, $|\omg_l|^4 = \lm$, for $l\in\{1,\dots,d\}$ and the associated auxiliary  functions $v_l(x)=\sin(\omg_l\cdot x)\in H^2(\Omg)$, $l\in\{1,\dots,d\}$, be constructed as in Lemma~\ref{lem:aux} with $\lm$ being,  as above, the $k$-th eigenvalue of the Dirichlet biharmonic operator. Recall also that the functions $\{v_l\}_{l=1}^d$ are constructed to be mutually orthogonal in $L^2(\Omg)$. Consider the linear subspace of $H^2(\Omg)$ defined by
\begin{equation}\label{eq:cM}
	\cM := \cL_\lm + {\rm span}\,\big\{v_1,v_2,\dots,v_d\big\}.
\end{equation}
In order to show that $\dim\cM = k + d$ we need to verify that
\[	
	\cL_\lm\cap{\rm span}\,\{v_1,v_2,\dots,v_d\} = \{0\}.
\]
A generic element of $\cL_\lm\cap{\rm span}\,\{v_1,v_2,\dots,v_d\}$ is given by
\[
	w = \sum_{l=1}^d c_l v_l
\]
with some complex coefficients $\{c_l\}_{l=1}^d$ and satisfies $w\in\cL_\lm\subset H^2_0(\Omg)$. Using the definition of the functions $\{v_l\}_{l=1}^d$ we get for any $x\in\Omg$ by a direct computation
\[
\begin{aligned}
	(-\Delta w)(x) & 
	= 
	-\sum_{l=1}^d c_l\Delta(\sin(\omg_l\cdot x))\\
	& = \sum_{l=1}^d |\omg_l|^2 c_l \sin(\omg_l\cdot x)
	= \sqrt{\lm}\sum_{l=1}^d c_l v_l(x) =\sqrt{\lm} w(x).
\end{aligned}	
\]
Thus, we have $w\in H^2_0(\Omg)$ and $-\Delta w = \sqrt{\lm}w$ and by Lemma~\ref{lem:BR} we infer that $w \equiv 0$.

\medskip

\noindent{\it Step 2: application of the min-max principle.}
Recall that $\cM$ is a linear subspace of $H^2(\Omg)$ with $\dim\cM = k+d$ constructed in the previous step of the proof.
Let $v\in \cM$ be arbitrary. The function $v$ is represented by 
\[
	v = u + \sum_{l=1}^d c_l v_l,
\]  
with a certain function $u\in\cL_\lm$ and  some complex coefficients $\{c_l\}_{l=1}^d$. Plugging the function $v$ into the quadratic form $\frh_{\rm N}$ of the biharmonic operator $\sfH_{\rm N}$ with Neumann boundary conditions we get
\begin{equation}\label{eq:0}
	\frh_{\rm N}[v] = \sum_{i,j=1}^d \|\p_{ij} u\|^2_{L^2(\Omg)} + 2\Re\left\{\sum_{i,j=1}^d\sum_{l=1}^d\left(\p_{ij} u,  c_l \p_{ij}v_l\right)_{L^2(\Omg)}\right\} + \sum_{i,j=1}^d\bigg\|\sum_{l=1}^dc_l\p_{ij}v_l\bigg\|^2_{L^2(\Omg)}. 
\end{equation}
We analyse the three terms on the right hand side of the above equation separately.
By~\eqref{eq:cL2} we get that
\begin{equation}\label{eq:1}
	\sum_{i,j=1}^d \|\p_{ij} u\|_{L^2(\Omg)}^2 
	\le 
	\lm\|u\|_{L^2(\Omg)}^2.
\end{equation}

Notice that $\p_j u \in H^1_0(\Omg)$ for any $j\in\{1,2,\dots,d\}$. Hence, we can apply the integration by parts formula in~\cite[Theorem 1.5.3.1]{G85} twice to get
\begin{equation}\label{eq:2}
\begin{aligned}
\sum_{i,j=1}^d\sum_{l=1}^d\left(\p_{ij} u,  c_l \p_{ij}v_l\right)_{L^2(\Omg)} &\!=\!
-\sum_{i,j=1}^d\sum_{l=1}^d \left(\p_j u,c_l\p_{i}\p_{ij}v_l\right)_{L^2(\Omg)}
\!=\! \sum_{i,j=1}^d\sum_{l=1}^d \left(u,c_l\p_{ij}\p_{ij}v_l\right)_{L^2(\Omg)}\\
&=\sum_{l=1}^d\sum_{i,j=1}^d \ov{c_l}\left(u, \omg_{l,i}^2\omg_{l,j}^2 v_l\right)_{L^2(\Omg)}
=\lm\sum_{l=1}^d \ov{c_l}(u,v_l)_{L^2(\Omg)},
\end{aligned}
\end{equation}
where we used the notation $\omg_l = (\omg_{l,1},\omg_{l,2},\dots,\omg_{l,d})^\top$ for $l\in\{1,2,\dots,d\}$ and applied that $|\omg_l|^4 = \lm$ in the last step.

For the last term in~\eqref{eq:0} we obtain
employing mutual orthogonality of $\{v_l\}_{l=1}^d$ in $L^2(\Omg)$ that
\begin{equation}\label{eq:3}
\begin{aligned}
	\sum_{i,j=1}^d\bigg\|\sum_{l=1}^d c_l\p_{ij}v_l\bigg\|^2_{L^2(\Omg)} 
	&=
	\sum_{i,j=1}^d\bigg\|-\sum_{l=1}^dc_l\omg_{l,i}\omg_{l,j}v_l\bigg\|_{L^2(\Omg)}^2
	= 
	\sum_{l=1}^d\sum_{i,j=1}^d
	\omg^2_{l,i}\omg^2_{l,j}|c_l|^2\|v_l\|_{L^2(\Omg)}^2\\
	& = \lm\sum_{l=1}^d|c_l|^2\|v_l\|^2_{L^2(\Omg)} = \lm\bigg\|\sum_{l=1}^d c_lv_l\bigg\|^2_{L^2(\Omg)} ,
\end{aligned}
\end{equation}
where we used that $|\omg_l|^4 = \lm$ for all $l\in\{1,2,\dots,d\}$ in between.

Combining~\eqref{eq:0} with~\eqref{eq:1},~\eqref{eq:2}, and~\eqref{eq:3} we obtain that
\[
\begin{aligned}
	\frh_{\rm N}[v] &\le \lm\|u\|^2_{L^2(\Omg)} + 2\lm\Re\left\{\bigg(u,\sum_{l=1}^d c_lv_l\bigg)_{L^2(\Omg)}\right\} + \lm\bigg\|\sum_{l=1}^d c_l v_l\bigg\|^2_{L^2(\Omg)}\\
	& = \lm\bigg\|u+\sum_{l=1}^dc_l v_l\bigg\|^2_{L^2(\Omg)} = \lm\|v\|^2_{L^2(\Omg)}.
\end{aligned}
\]  
Finally, taking into account that $\dim\cM = k+d$, we get from the inequality $\frh_{\rm N}[v]\le \lm\|v\|^2_{L^2(\Omg)}$ valid for any $v\in\cM$ combined with the min-max characterisation~\eqref{eq:minmaxN} for the eigenvalues of the biharmonic operator with Neumann boundary conditions that $\mu_{k+d}\le \lm_k$ for any $k\in\dN$.  
\section{Proof of Theorem~\ref{thm2}}\label{sec:proof2}
Recall that in the assumptions of the theorem the bounded connected Lipschitz domain $\Omg\subset\dR^d$ is such that 
$\sfJ_l(\Omg) = \Omg$ for all $l\in \{2,\dots,d\}$, where the mappings $\sfJ_l$ are defined in~\eqref{eq:symmetries}.
The proof of this theorem relies on the same technique as the proof of Theorem~\ref{thm1} and we again divide the argument into two steps, where in the first step we construct a subspace of trial functions and in the second step we use this subspace together with the min-max principle. 

\medskip

\noindent{\it Step 1: construction of a trial subspace.}
As in the proof of Theorem~\ref{thm1}, let $k\in\dN$ and let us fix the shorthand abbreviation $\lm = \lm_k$ for the $k$-th eigenvalue of the biharmonic operator $\sfH_{\rm D}$ on $\Omg$ with Dirichlet boundary conditions. As before by the consequence~\eqref{eq:cL} of the min-max principle there is a $k$-dimensional linear subspace $\cL_\lm$ of $H^2_0(\Omg)$ such that
\begin{equation}\label{eq:cL3}
	\frh_{\rm D}[u] \le \lm \|u\|^2_{L^2(\Omg)},\qquad \text{for all}\,\,u\in\cL_\lm.
\end{equation}
Let us fix $\omg := \lm^{1/4} > 0$ (scalar) and introduce the following $d + 1$ functions
in the Sobolev space $H^2(\Omg)$
\begin{equation}\label{eq:v}
	v_0(x) := \sin(\omg x_1),\quad 
	v_1(x) := \cos(\omg x_1)\qquad\text{and}\qquad v_l(x) := \sin(\omg x_l),\quad l\in \{2,\dots,d\}.
\end{equation}
We claim that the functions $\{v_0,v_1,\dots v_d\}$ are linearly independent. Indeed, suppose that for some complex numbers $\{c_l\}_{l=0}^d$ we have the relation
\[
	 \sum_{l=0}^d c_l v_l(x) = 0.
\]
Differentiating the above identity with respect to $x_l$ for all $l\in\{1,2,\dots,d\}$ we get that 
for any $x = (x_1,\dots x_d)\in\Omg$ there holds 
\[
	c_0\cos(\omg x_1) - c_1\sin(\omg x_1) = 0\qquad\text{and}\qquad c_l\cos(\omg x_l) = 0\quad \text{for all}\,\,l\in\{2,\dots,d\}.
\]
Thus, we conclude by simple algebraic reasons that $c_l = 0$ for all $l\in\{0,1,\dots,d\}$.

Next, we show that the functions $\{v_l\}_{l=0}^d$ satisfy certain orthogonality properties in $L^2(\Omg)$.
It follows from the symmetries of the domain $\Omg$ that for any $l\in\{2,\dots,d\}$ there holds 
\[
	(v_0,v_l)_{L^2(\Omg)} = 
	\int_\Omg \sin(\omg x_1)\sin(\omg x_l)\dd x = 
	\int_\Omg \sin(\omg y_1)\sin(-\omg y_l)\dd y = -(v_0,v_l)_{L^2(\Omg)},
\]
where we performed the change of variables $x = \sfJ_l y$ in the integral and used that $\sfJ_l(\Omg) = \Omg$.	Hence, we obtain the following orthogonality property
\begin{equation}\label{eq:ortho1}
	(v_0,v_l)_{L^2(\Omg)} = 0,\qquad\text{for all}\,\,l\in\{2,\dots,d\}.
\end{equation}
Analogously we arrive at the orthogonality 
properties
\begin{align}
		(v_1,v_l)_{L^2(\Omg)}& = 0,\qquad \text{for all}\,\,\, l\in\{2,\dots,d\},\label{eq:ortho2}\\
		(v_i,v_j)_{L^2(\Omg)} &=0,\qquad
		\text{for all}\,\, i,j\in\{2,\dots,d\},\,\,i\ne j.
		\label{eq:ortho3}
\end{align}
We also remark that the functions $v_0$ and $v_1$ are, in general, not orthogonal in $L^2(\Omg)$. Their orthogonality is not needed for the argument.

Let us consider the linear subspace of $H^2(\Omg)$ defined by
\[
	\cK := \cL_\lm + {\rm span}\,\{v_0,v_1,\dots,v_d\}.
\]
In order to show that $\dim\cK = k + d+1$ it suffices to check that
\[
	\cL_\lm\cap{\rm span}\,\{v_0,v_1,\dots,v_d\} = \{0\}.
\]
A generic element of $\cL_\lm\cap{\rm span}\,\{v_0,v_1,\dots,v_d\}$ is given by
\[
	w = \sum_{l=0}^d c_l v_l
\]
with some complex coefficients $\{c_l\}_{l=0}^d$ and satisfies $w\in\cL_\lm\subset H^2_0(\Omg)$.
By the choice of the functions $\{v_l\}_{l=0}^d$ we have
$-\Delta w = \sqrt{\lm}w$. 
Thus, taking into account that $w\in H^2_0(\Omg)$ we infer by Lemma~\ref{lem:BR}  that $w \equiv 0$.

\medskip 
 
\noindent{\it Step 2: application of the min-max principle.}
Recall that $\cK$ is a linear subspace of $H^2(\Omg)$ with $\dim\cK = k+d+1$ constructed in the previous step of the proof.
Let $v\in \cK$ be arbitrary. The function $v$ is represented by 
\[
v = u + \sum_{l=0}^d c_l v_l,
\]  
with a certain function $u\in\cL_\lm$ and  some complex coefficients $\{c_l\}_{l=0}^d$. Plugging the function $v$ into the quadratic form $\frh_{\rm N}$ of the biharmonic operator $\sfH_{\rm N}$ with Neumann boundary conditions we get
\begin{equation}\label{eq:02}
\frh_{\rm N}[v] = \sum_{i,j=1}^d \|\p_{ij} u\|^2_{L^2(\Omg)} + 2\Re\left\{\sum_{i,j=1}^d\sum_{l=0}^d\left(\p_{ij} u,  c_l \p_{ij}v_l\right)_{L^2(\Omg)}\right\} + \sum_{i,j=1}^d\bigg\|\sum_{l=0}^dc_l\p_{ij}v_l\bigg\|^2_{L^2(\Omg)}. 
\end{equation}
We recall that by~\eqref{eq:cL2} there holds
\begin{equation}\label{eq:12}
\sum_{i,j=1}^d \|\p_{ij} u\|_{L^2(\Omg)}^2 
\le 
\lm\|u\|_{L^2(\Omg)}^2.
\end{equation}

Notice that $\p_j u \in H^1_0(\Omg)$ for any $j\in\{1,2,\dots,d\}$. Using the explicit form of the function $\{v_l\}_{l=0}^d$ in~\eqref{eq:v} and applying the integration by parts formula~\cite[Theorem 1.5.3.1]{G85} we get
\begin{equation}\label{eq:22}
\begin{aligned}
	\sum_{i,j=1}^d\sum_{l=0}^d\left(\p_{ij} u,  c_l \p_{ij}v_l\right)_{L^2(\Omg)} 
	&=
	\left(\p_1^2 u, c_0\p_1^2 v_0+c_1\p_1^2v_1\right)_{L^2(\Omg)} +
	\sum_{l=2}^d \left(\p_{l}^2u,c_l\p_{l}^2v_l\right)_{L^2(\Omg)}\\
	&=  
	\left( u, c_0\p_1^2\p_1^2 v_0+c_1\p_1^2\p_1^2v_1\right)_{L^2(\Omg)} +
	\sum_{l=2}^d \left(u,c_l\p_{l}^2\p_{l}^2v_l\right)_{L^2(\Omg)}\\
	&= \omg^4(u,c_0 v_0+c_1v_1)_{L^2(\Omg)} + \omg^4\sum_{l=2}^d
	(u,c_l v_l)_{L^2(\Omg)}\\
	& = \lm\sum_{l=0}^d\ov{c_l}(u,v_l)_{L^2(\Omg)},
\end{aligned}
\end{equation}
where we employed that $\lm = \omg^4$; here we use the abbreviation
$\p_i^2 u = \frac{\p^2 u}{\p x_i^2}$, $i\in\{1,2,\dots,d\}$.   

For the last term in~\eqref{eq:02} we obtain
employing the explicit form of the functions $\{v_l\}_{l=0}^d$ in~\eqref{eq:v} and the orthogonality properties~\eqref{eq:ortho1},~\eqref{eq:ortho2} and~\eqref{eq:ortho3} that
\begin{equation}\label{eq:32}
\begin{aligned}
\sum_{i,j=1}^d\bigg\|\sum_{l=0}^d c_l\p_{ij}v_l\bigg\|^2_{L^2(\Omg)} 
& = \big\|c_0 \p_1^2 v_0 + c_1\p_1^2v_1\big\|_{L^2(\Omg)}^2
+ \sum_{l=2}^d|c_l|^2\big\|\p_l^2 v_l\big\|^2_{L^2(\Omg)} \\
&= \omg^4\big\|c_0  v_0 + c_1v_1\big\|_{L^2(\Omg)}^2 + \omg^4\sum_{l=2}^d|c_l|^2\|v_l\|^2_{L^2(\Omg)}\\
& = \lm\bigg\|\sum_{l=0}^d c_l v_l\bigg\|^2_{L^2(\Omg)},
\end{aligned}
\end{equation}
where we used that $\omg^4 = \lm$ in between.

Combining~\eqref{eq:02} with~\eqref{eq:12},~\eqref{eq:22}, and~\eqref{eq:32} we obtain that
\[
\begin{aligned}
\frh_{\rm N}[v] &\le \lm\|u\|^2_{L^2(\Omg)} + 2\lm\Re\left\{\bigg(u,\sum_{l=0}^d c_lv_l\bigg)_{L^2(\Omg)}\right\} + \lm\bigg\|\sum_{l=0}^d c_l v_l\bigg\|^2_{L^2(\Omg)}\\
& = \lm\bigg\|u+\sum_{l=0}^dc_l v_l\bigg\|^2_{L^2(\Omg)} = \lm\|v\|^2_{L^2(\Omg)}.
\end{aligned}
\]  
Finally, taking into account that $\dim\cK = k+d+1$, we derive from the inequality $\frh_{\rm N}[v]\le \lm\|v\|^2_{L^2(\Omg)}$ valid for any $v\in\cK$ combined with the min-max characterisation~\eqref{eq:minmaxN} for the eigenvalues of the biharmonic operator with Neumann boundary conditions that $\mu_{k+d+1}\le \lm_k$ for any $k\in\dN$.  
\section*{Acknowledgement}
The author gratefully acknowledges the support by the grant No.~21-07129S of the Czech Science Foundation.

\newcommand{\etalchar}[1]{$^{#1}$}

\end{document}